\documentclass[11pt,twoside,leqno]{article}
\usepackage[T1]{fontenc}
\usepackage{amsmath,amssymb,amsthm,mathrsfs}
\usepackage{txfonts}
\usepackage{microtype}
\usepackage[colorlinks=true,linkcolor=blue,citecolor=blue,urlcolor=blue,hyperfootnotes=false]{hyperref}
\allowdisplaybreaks
\numberwithin{equation}{section}
\newtheorem{theorem}{Theorem}[section]
\newtheorem{lemma}[theorem]{Lemma}
\newtheorem{proposition}[theorem]{Proposition}
\newtheorem{corollary}[theorem]{Corollary}
\theoremstyle{remark}
\newtheorem{remark}[theorem]{Remark}
\newcommand{\R}{\mathbb R}
\newcommand{\cR}{\mathscr R}
\newcommand{\Dom}{\operatorname{Dom}}
\newcommand{\Ran}{\operatorname{Ran}}

\newcommand{\ind}{\mathbf 1}
\title{\bf The Coulhon--Duong conjecture for the Riesz transform\\
on complete Riemannian manifolds}
\author{Rui Chen, Renjin Jiang, Bo Li \& Hong-Quan Li}
\date{}
\begin{document}
\maketitle
\begingroup
\renewcommand{\thefootnote}{}
\footnotetext{\hspace{-1.8em}\textit{2020 Mathematics Subject Classification.}
42B20, 58J35, 47B90.\newline
\textit{Key words and phrases.} Riesz transform, complete Riemannian manifold,
weak type estimate, obstacle problem, sub-Markovian semigroup.}
\endgroup
\begin{center}
\begin{minipage}{12.5cm}\small
\textbf{Abstract.} Let $M$ be a complete, non-compact Riemannian manifold. We prove that
its Riesz transform is of weak type $(1,1)$, with constant $2$ for real-valued
functions. Consequently, it is bounded on $L^p(M)$ for $1<p\leq2$, with
constants depending only on $p$, which proves the Coulhon--Duong conjecture.
The proof uses an obstacle decomposition for positive self-adjoint operators
with sub-Markovian semigroups.
Applying this decomposition to the shifted square-root Laplacian and using locality of the Sobolev differential
yields the endpoint estimate without geometric or heat kernel assumptions.
 We also obtain the
corresponding result for Dirichlet spaces admitting a local Hilbertian
differential calculus.
\end{minipage}
\end{center}

\section{Introduction and main results}\label{sec:introduction}
Let $(M,g)$ be a complete, non-compact Riemannian manifold without boundary, and let
$\mu$ be its Riemannian measure. Denote by $\Delta$ the nonnegative
Laplace--Beltrami operator. In this paper, we study the Riesz transform
\[
\cR=d\Delta^{-1/2},
\]
where $d$ is the exterior differential on functions. By the metric
identification of $T^*M$ and $TM$, this is also the operator
$\nabla\Delta^{-1/2}$. The transform is defined on $L^2(M)$ by polar
decomposition and is taken to be zero on $\ker\Delta$.

A conjecture of Coulhon and Duong \cite{CoulhonDuong2003} asserts that
$\cR$ is bounded on $L^p(M)$ for every $1<p\leq2$, on any complete
Riemannian manifold. In particular, the conjecture predicts
\begin{equation}\label{eq:CD-conj}
 \|\nabla\varphi\|_{L^p(M)}
 \leq C_p\|\Delta^{1/2}\varphi\|_{L^p(M)},
 \qquad \varphi\in C_c^\infty(M),\quad 1<p\leq2.
\end{equation}
The case $p=2$ follows from the quadratic-form identity
$\|d\varphi\|_2=\|\Delta^{1/2}\varphi\|_2$. For $1<p<2$, it suffices
to establish a weak type $(1,1)$ estimate and then apply interpolation.
Our main result is the following.

\begin{theorem}\label{thm:main-weak}
Let $(M,g)$ be a complete Riemannian manifold. For every real-valued
$f\in L^1(M)\cap L^2(M)$ and every $\lambda>0$,
\begin{equation}\label{eq:main-weak}
 \mu\bigl(\{x\in M:|\cR f(x)|>\lambda\}\bigr)
 \leq \frac{2}{\lambda}\|f\|_{L^1(M)}.
\end{equation}
Moreover, $\cR$ has a unique continuous linear extension from
$L^1(M;\R)$ to $L^{1,\infty}(M;T^*M)$ satisfying the same estimate.
\end{theorem}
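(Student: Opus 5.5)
We plan to follow the strategy indicated in the abstract: an obstacle (Calderón--Zygmund-type) decomposition built from the operator itself rather than from the geometry. Fix $\lambda>0$ and real $f\in L^1\cap L^2$. For $\varepsilon>0$ set $L_\varepsilon=(\Delta+\varepsilon)^{1/2}$, a positive self-adjoint operator with bounded inverse. Its semigroup $e^{-tL_\varepsilon}$ is sub-Markovian, since it is subordinated to the heat semigroup and multiplied by $e^{-\sqrt\varepsilon\,t}$-type factors. We aim to prove
\[
\mu\bigl(\{|d L_\varepsilon^{-1} f|>\lambda\}\bigr)\le \frac{2}{\lambda}\|f\|_1
\]
uniformly in $\varepsilon$, and then let $\varepsilon\to0$. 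Since $dL_\varepsilon^{-1}f\to d\Delta^{-1/2}f$ in $L^2$ for $f$ orthogonal to $\ker\Delta$, by spectral calculus, we can pass to an a.e.\ subsequence and conclude by Fatou. On $\ker\Delta$ the operator $\cR$ is zero, and on that piece $dL_\varepsilon^{-1}$ vanishes anyway, because $d u=0$ for $u\in\ker\Delta$.

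The core step is the obstacle decomposition. Let $u=L_\varepsilon^{-1}f$, which lies in $\Dom(L_\varepsilon)$ and hence in the Sobolev space $W^{1,2}$. We solve the obstacle problem for the quadratic form of $L_\varepsilon$ (equivalently, the resolvent-type variational problem) with a two-sided constraint adapted to the level $\lambda$. Concretely, we look for a decomposition $f=g+b$ with the following properties: $|g|\le\lambda$, and more precisely $g$ is obtained from $f$ by a truncation/projection that is a contraction in $L^1$. The part $b$ is such that $v=L_\varepsilon^{-1}b$ is supported-in-effect on an exceptional set $E$, meaning $v$ is constant, hence locally constant, off $E$, with $\mu(E)\le\|f\|_1/\lambda$. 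Existence and the $L^1$ control should come from the sub-Markov property, via the Beurling--Deny normal contraction and comparison principles for the obstacle problem. The measure of the contact/noncoincidence set is then bounded by testing the variational inequality against the constant-like functions allowed by the sub-Markov property.

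Given the decomposition, the estimate splits as
\[
\{|du|>\lambda\}\subset\{|dL_\varepsilon^{-1}g|>\lambda\}\cup E',
\]
where $E'$ is the set where $dv\neq0$. We then use two ingredients.
\begin{itemize}
\item Locality of the Sobolev differential: $dv=0$ a.e.\ on the set where $v$ is locally constant. This gives $\mu(E')\le\mu(E)\le\|f\|_1/\lambda$.
\item The $L^2$ identity: $\|dL_\varepsilon^{-1}g\|_2^2\le\|L_\varepsilon^{-1/2}\cdots\|$, or more simply $\|dL_\varepsilon^{-1}g\|_2\le\|g\|_2$ since $\|dw\|_2^2\le\|L_\varepsilon w\|_2^2$. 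Combined with $\|g\|_2^2\le\lambda\|g\|_1\le\lambda\|f\|_1$, Chebyshev gives $\mu(\{|dL_\varepsilon^{-1}g|>\lambda\})\le\|f\|_1/\lambda$.
\end{itemize}
Adding the two bounds gives exactly the constant $2$. The real-valuedness of $f$ is used because the truncation and obstacle arguments require order structure.

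The main obstacle is constructing the decomposition so that $v=L_\varepsilon^{-1}b$, and not merely $b$, is locally constant off a set of measure at most $\|f\|_1/\lambda$, while $g$ stays bounded by $\lambda$. Ordinary Calderón--Zygmund decompositions fail here because there is no geometry. Our first attempt is the following. Let $w$ minimize $\frac12\|L_\varepsilon^{1/2}w\|_2^2-\langle f,w\rangle$, which is comparable to $\frac12\langle L_\varepsilon w,w\rangle$, over the convex set $\{|L_\varepsilon w - \text{(trunc)}|\}$, or equivalently project $u$ onto $\{w:|L_\varepsilon w|\le\lambda\}$ in the form norm. The Lagrange multiplier from the variational inequality is then $b$, supported where the constraint is active. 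We then show, via Kato-type inequalities for sub-Markovian generators, that the active set has measure at most $\|f\|_1/\lambda$. Finally, the extension to $L^1$ follows by density of $L^1\cap L^2$ and completeness of $L^{1,\infty}$ under its quasi-norm, with the estimate preserved in the limit.
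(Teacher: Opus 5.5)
Everything surrounding the decomposition matches the paper's proof; using $(\Delta+\varepsilon)^{1/2}$ instead of $\Delta^{1/2}+\kappa$ is harmless. This includes locality $\ind_{\{v=0\}}dv=0$ for $v\in W^{1,2}(M)$, which is a statement about level sets rather than about open sets on which $v$ is locally constant. It also includes Chebyshev with $\|dL_\varepsilon^{-1}\|_{2\to2}\le1$ and $\|g\|_2^2\le\lambda\|g\|_1$, removal of the shift by $L^2$ convergence and Fatou, and extension by density. The gap is the decomposition itself, which carries the whole content of the theorem and which you only assert.

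Take your concrete candidate, the form-norm projection $w^*$ of $u=L_\varepsilon^{-1}f$ onto $\{w:|L_\varepsilon w|\le\lambda\}$. With $v=u-w^*$ and $g=L_\varepsilon w^*$, its variational inequality is $\langle v,h-g\rangle\le0$ for all $|h|\le\lambda$, and this gives $g=\lambda\operatorname{sgn}(v)$ on $\{v\neq0\}$. So it is $v$, not $b=L_\varepsilon v$, that vanishes off the active set; $b$ has no support property because $L_\varepsilon$ is nonlocal.

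Testing $L_\varepsilon v+g=f$ against truncated signs of $v$ does give $\lambda\mu(\{v\neq0\})\le\|f\|_1$. However, the Chebyshev step needs $\|g\|_2^2\le\lambda\|f\|_1$, essentially $\|g\|_1\le\|f\|_1$, and that test controls only $\int_{\{v\neq0\}}|g|$. On the coincidence set $\{v=0\}$ you have $g=f-L_\varepsilon v$ with $L_\varepsilon v\neq0$ in general, and your claim that $f\mapsto g$ is an ``$L^1$ contraction'' is never justified there.

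Existence is also open. The set $\{w\in\Dom(L_\varepsilon):|L_\varepsilon w|\le\lambda\}$ need not be closed in the form norm, and the admissible $g$ form an unbounded subset of $L^2$ when $\mu(M)=\infty$. A minimizer may therefore only give $g\in L^\infty$, and Chebyshev would again require the missing $L^1$ bound.

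The paper fills exactly this hole with Theorem~\ref{thm:abstract-obstacle}, applied separately to $f^+$ and $f^-$ with the one-sided constraint $0\le m\le\lambda$. It solves the penalized equation $Au_\varepsilon+\beta_\varepsilon(u_\varepsilon)=f$, with $\beta_\varepsilon(s)=\min\{s^+/\varepsilon,\lambda\}$, by minimizing a coercive strictly convex functional on $\Dom(A^{1/2})$, so no closedness issue arises. Pairing with $u_\varepsilon^-$ and applying Lemma~\ref{lem:positive-coercivity} gives $u_\varepsilon\ge0$. Since $m_\varepsilon=\beta_\varepsilon(u_\varepsilon)$ vanishes where $u_\varepsilon=0$, pairing with $\tau_\delta(u_\varepsilon)$ and using the sub-Markov Jensen inequality of Lemma~\ref{lem:truncation-dissipation} gives $\|m_\varepsilon\|_1\le\|f\|_1$, with no coincidence-set term to control. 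The comparison $u_\varepsilon\le u_\eta$ for $\varepsilon\le\eta$, weak limits, and $u_{\varepsilon_j}\ge u$ then yield $Au+m=f$, $\|m\|_1\le\|f\|_1$ and $m=\lambda$ on $\{u>0\}$. Without this construction, or an equivalent Brezis--Strauss type $L^1$ estimate for your two-sided problem, the proof is incomplete at its central step.
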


\begin{corollary}\label{cor:CD}
For every $1<p\leq2$, the Riesz transform extends to a bounded operator
from $L^p(M)$ to $L^p(M;T^*M)$, and
\begin{equation}\label{eq:Rp-main}
 \|\cR f\|_{L^p(M;T^*M)}\leq C_p\|f\|_{L^p(M)}.
\end{equation}
Here $C_p$ depends only on $p$, and one may take $C_2=1$.
In particular, \eqref{eq:CD-conj} holds.
\end{corollary}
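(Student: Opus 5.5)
The plan is to obtain Corollary~\ref{cor:CD} from Theorem~\ref{thm:main-weak} by Marcinkiewicz interpolation, handling real parts first and then complex functions, and finally deriving \eqref{eq:CD-conj} by duality-free substitution.

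\emph{Step 1: the endpoints.} At $p=2$ the spectral theorem gives, for $f\in L^2(M)$ and with $P$ the orthogonal projection onto $\overline{\Ran\Delta^{1/2}}=(\ker\Delta)^\perp$,
\[
\|\cR f\|_2=\|d\Delta^{-1/2}Pf\|_2=\|\Delta^{1/2}\Delta^{-1/2}Pf\|_2=\|Pf\|_2\le\|f\|_2 .
\]
This uses the quadratic-form identity $\|du\|_2=\|\Delta^{1/2}u\|_2$ for $u\in\Dom(\Delta^{1/2})$, which holds because $\Delta$ is the Friedrichs extension of the Laplacian on $C_c^\infty(M)$; by completeness it is essentially self-adjoint. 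In particular $C_2=1$. At $p=1$, Theorem~\ref{thm:main-weak} gives the weak $(1,1)$ bound with constant $2$ for real $f\in L^1\cap L^2$.

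\emph{Step 2: interpolation.} Consider the sublinear map $f\mapsto|\cR f|$ on real-valued $L^1\cap L^2$ functions. Marcinkiewicz interpolation between weak $(1,1)$ with constant $2$ and strong $(2,2)$ with constant $1$ gives, for $1<p<2$,
\[
\|\cR f\|_p\le C_p\|f\|_p ,
\]
where $C_p$ depends only on $p$ and the two endpoint constants, and hence only on $p$. The standard proof splits $f=f\ind_{\{|f|>\lambda\}}+f\ind_{\{|f|\le\lambda\}}$ at each height $\lambda$, which keeps both pieces real and in $L^1\cap L^2$; this is the one point that needs checking. For complex $f$, write $f=u+iv$ with $u,v$ real, use linearity of $\cR$, and absorb a factor $2$ into $C_p$. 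By density of $L^1\cap L^2$ in $L^p$, $\cR$ extends uniquely and boundedly from $L^p(M)$ to $L^p(M;T^*M)$. This extension agrees with the $L^2$ operator on $L^p\cap L^2$, by an a.e.-convergent subsequence argument.

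\emph{Step 3: \eqref{eq:CD-conj}.} For $\varphi\in C_c^\infty(M)$ we have $\varphi\in\Dom(\Delta)$, so $\Delta^{1/2}\varphi\in L^2$ and
\[
\cR\Delta^{1/2}\varphi=d\Delta^{-1/2}\Delta^{1/2}\varphi=d(P\varphi)=d\varphi .
\]
The last equality holds because $\varphi-P\varphi\in\ker\Delta$, and harmonic $L^2$ functions have $\|du\|_2=\|\Delta^{1/2}u\|_2=0$, so $du=0$. If $\Delta^{1/2}\varphi\notin L^p$ there is nothing to prove. Otherwise $\Delta^{1/2}\varphi\in L^p\cap L^2$, and Step 2 gives $\|\nabla\varphi\|_p\le C_p\|\Delta^{1/2}\varphi\|_p$.

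The main obstacle is Step 2's bookkeeping: making sure the truncations used in the interpolation stay within the class (real, $L^1\cap L^2$) where Theorem~\ref{thm:main-weak} applies, and that the constant depends only on $p$. Both are routine once checked.
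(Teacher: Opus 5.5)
Your proposal is correct and follows the paper's argument: Marcinkiewicz interpolation for $f\mapsto|\cR f|$ on real functions between Theorem~\ref{thm:main-weak} and the $L^2$ contraction from the polar decomposition, then splitting into real and imaginary parts and density, with $d\varphi=\cR\Delta^{1/2}\varphi$ giving \eqref{eq:CD-conj}. The one difference is in the last step. The paper proves $\Delta^{1/2}\varphi\in L^p$ via the subordination integral and the bound $\|(I-H_t)\varphi\|_p\le\min\{t\|\Delta\varphi\|_p,2\|\varphi\|_p\}$, so the inequality is never vacuous; you instead note that the case $\|\Delta^{1/2}\varphi\|_p=\infty$ is trivial, which is logically sufficient for \eqref{eq:CD-conj} as stated but less informative.
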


The estimate in Theorem~\ref{thm:main-weak} is independent of the dimension
and of the geometry of $M$. We impose no volume doubling condition,
curvature bound, heat kernel estimate, or Poincar\'e inequality.
The stated endpoint constant concerns real-valued functions; the
$L^p$ conclusion also holds for complex-valued functions.

The study of the Laplacian and Riesz transforms in this setting goes
back to Strichartz \cite{Strichartz1983}. Bakry
\cite{Bakry1985,Bakry1987} obtained estimates under curvature assumptions.
Around the same time, Chen \cite{Chen1987} established the weak type $(1,1)$
estimate under non-negativity of Ricci curvature, while later Li \cite{Li1991} provided
another proof based on heat kernel regularity.
Coulhon and Duong \cite{CoulhonDuong1999} proved the weak type $(1,1)$
estimate under volume doubling and an on-diagonal heat kernel upper
bound. Their argument requires no pointwise estimate for the gradient
of the heat kernel. On arbitrary complete manifolds, Coulhon and Duong
\cite{CoulhonDuong2003} established the multiplicative inequality
\[
 \|\nabla f\|_p^2\leq C_p\|f\|_p\|\Delta f\|_p,
 \qquad 1<p\leq2,
\]
while Coulhon, Duong and Li \cite{CoulhonDuongLi2003} proved related
Littlewood--Paley--Stein estimates.

Several subsequent results treated geometries beyond the original
Gaussian setting. Chen, Coulhon, Feneuil and Russ
\cite{ChenCoulhonFeneuilRuss2017} studied manifolds with sub-Gaussian
heat kernel estimates, see also Li and Zhu \cite{LiZhu2018}. Hassell and Sikora \cite{HassellSikora2019},
Hassell, Nix and Sikora \cite{HassellNixSikora2024}, and Jiang, Li and Lin
\cite{JiangLiLin2025} obtained results on non-doubling manifolds with ends.
Ouhabaz \cite{Ouhabaz2024} proved a multiplicative estimate involving
$\Delta^{1/2+\varepsilon}$ and $\Delta^{1/2-\varepsilon}$ on general
complete manifolds, approaching the differential order in
\eqref{eq:CD-conj}. For $p>2$, boundedness is more sensitive to geometry;
see Auscher, Coulhon, Duong and Hofmann
\cite{AuscherCoulhonDuongHofmann2004}, Carron, Coulhon and Hassell
\cite{CarronCoulhonHassell2006} and Coulhon, Jiang, Koskela and Sikora \cite{CoulhonJiangKoskelaSikora2020}.
For further results concerning Poincar\'e
inequalities, perturbations, curvature decay, graphs, and domains, we
refer to \cite{AuscherCoulhon2005,BernicotFrey2016,Carron2007,Carron2017, Chen2015,ChenCoulhonHua2020,CoulhonSikora2010,
Devyver2014,Devyver2015,Jiang2021,JiangLin2024,Li1999,Li2010}
and the references therein.

The usual endpoint proof relies on a Calder\'on--Zygmund decomposition
and estimates away from the balls supporting the bad functions.
Without volume doubling or heat kernel bounds, this approach does not
provide the desired estimate. A different decomposition was introduced
by Ouyang, Spector and Stockdale \cite{OuyangSpectorStockdale2026}, who
used a fractional obstacle problem to obtain a dimension-free weak
type $(1,1)$ estimate for the vector Riesz transform on $\R^n$.
Mao, Wang and Zhang \cite{MaoWangZhang2026} adapted this approach to
stratified Lie groups.

Inspired in particular by \cite{OuyangSpectorStockdale2026}, 
we shall use an abstract obstacle decomposition which depends only on
self-adjointness, a positive spectral lower bound, and the
sub-Markov property. It is convenient to state this result separately.

\begin{theorem}\label{thm:abstract-obstacle}
Let $(X,\nu)$ be a $\sigma$-finite measure space, and let $A$ be a
self-adjoint operator on $L^2(X,\nu;\R)$. Suppose that $A\geq aI$ for
some $a>0$ and that $e^{-tA}$ is sub-Markovian. Given a nonnegative
$f\in L^1(X,\nu)\cap L^2(X,\nu)$ and $\lambda>0$, there exist
\[
 u\in\Dom(A)\cap L^1(X,\nu),\qquad
 m\in L^1(X,\nu)\cap L^\infty(X,\nu)
\]
such that
\begin{equation}\label{eq:obstacle}
 u\geq0,\qquad f=Au+m,\qquad 0\leq m\leq\lambda,
 \qquad m=\lambda\ \text{a.e. on }\{u>0\}.
\end{equation}
Furthermore,
\begin{equation}\label{eq:obstacle-bounds}
 \|m\|_1\leq\|f\|_1,
 \qquad \lambda\nu(\{u>0\})\leq\|f\|_1.
\end{equation}
\end{theorem}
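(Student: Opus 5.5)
We construct $u$ as the solution of an obstacle problem, i.e.\ a variational inequality for the closed form $\mathcal E(v,w)=\langle A^{1/2}v,A^{1/2}w\rangle$ on $\Dom(A^{1/2})$. The form is coercive, since $\mathcal E(v,v)\geq a\|v\|_2^2$. Define the closed convex set $K=\{v\in\Dom(A^{1/2}):v\geq0\}$ and minimize
\[
J(v)=\tfrac12\mathcal E(v,v)-\langle f-\lambda,v\rangle
\]
over $K$. Here $\langle f-\lambda,v\rangle$ is not an $L^2$ pairing, because $\lambda\notin L^2$ when $\nu(X)=\infty$. We therefore first work with $\lambda\ind_{E}$ for sets $E$ of finite measure increasing to $X$ and pass to the limit. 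Alternatively, we write $\langle f,v\rangle-\lambda\int v$ and restrict to $v\in K\cap L^1$, using lower semicontinuity. Coercivity gives existence and uniqueness of a minimizer $u\in K$. The Euler--Lagrange inequality reads
\[
\mathcal E(u,v-u)\geq\langle f-\lambda,v-u\rangle \quad\text{for all } v\in K.
\]

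Next we extract the structure. Testing with $v=u+\varphi$ for $\varphi\geq0$ shows that $\mu:=f-\lambda-Au\leq 0$ as a functional. Testing with $v=u\pm\varepsilon\varphi$ for $\varphi$ supported where $u>0$ shows that $\mu$ vanishes there. The key regularity step is to show that $Au$ is actually a function and that $m:=f-Au$ satisfies $0\leq m\leq\lambda$. The upper bound $m\leq\lambda$ is exactly $\mu\leq0$. For the lower bound $m\geq0$, i.e.\ $Au\leq f$, we use the sub-Markov property. It implies the Markov property of the form: $\mathcal E(v\wedge w,\cdot)$ contractions, and in particular $\mathcal E(v^+,v^-)\leq 0$. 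A cleaner route is via the penalized problem
\[
Au_\varepsilon+\tfrac1\varepsilon\beta(u_\varepsilon)=f-\lambda
\]
with the monotone penalty $\beta(s)=-s^-$, or better via the equivalent characterization of $u$ as the smallest $A$-supersolution, $u=\inf\{v\geq0: Av\geq f-\lambda\}$. Comparing $u$ with $u\wedge A^{-1}f$, which lies in $K$ because $A^{-1}f\geq0$ by positivity of $A^{-1}=\int_0^\infty e^{-tA}\,dt$, gives $u\leq A^{-1}f$. In the penalized equation the term $\tfrac1\varepsilon u_\varepsilon^-$ is then bounded between $0$ and $\lambda$: test against $(u_\varepsilon^- -\lambda\varepsilon)^+$ and use the Markov property of $\mathcal E$. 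Letting $\varepsilon\to0$ with uniform $\Dom(A^{1/2})$ bounds and weak compactness yields $u\in\Dom(A)$ with $m=f-Au=\lambda-\lim\tfrac1\varepsilon u_\varepsilon^-\in[0,\lambda]$, and $m=\lambda$ a.e.\ on $\{u>0\}$. I expect this step, showing $Au\in L^2$ with the two-sided bound $0\leq m\leq\lambda$ using only sub-Markovianity, to be the main obstacle. The comparison arguments need $T$-contraction-type inequalities for $\mathcal E$, which follow from Beurling--Deny criteria applied to the sub-Markovian semigroup.

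Finally come the $L^1$ bounds. Since $0\leq u\leq A^{-1}f$ and $\|A^{-1}f\|_1\leq a^{-1}\|f\|_1$, because $e^{-tA}$ is an $L^1$-contraction (by duality of the $L^\infty$-contraction with symmetry) and $\|e^{-tA}\|_{2\to2}\leq e^{-at}$ allows the integral to converge, we get $u\in L^1$. Then $m=f-Au\geq 0$ lies in $L^1\cap L^\infty$. For $\|m\|_1\leq\|f\|_1$ we need $\int Au\geq0$. Pair $Au$ with $\ind_{\{u>0\}}$ approximated by $\min(u/\delta,1)$, using $\mathcal E(u,\min(u/\delta,1))\geq0$. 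On $\{u=0\}$ we have $Au=f-m\leq f$, while $m\geq0$ is controlled by sub-Markovianity, giving $\int Au\,\ind_{\{u=0\}}\geq0$ in the limit. This last fact is equivalent to $\int m\leq\int f$, and follows via $\int e^{-tA}u\leq\int u$, so that $\int Au=\lim_{t\to0}t^{-1}\int(u-e^{-tA}u)\geq0$. Then $\|m\|_1=\int f-\int Au\leq\|f\|_1$. Since $m=\lambda$ on $\{u>0\}$, we obtain $\lambda\nu(\{u>0\})\leq\|m\|_1\leq\|f\|_1$.
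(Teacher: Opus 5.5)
Your proposal has two genuine gaps: the $L^1$ estimates rest on a false inequality, and the existence step is not carried out when $\nu(X)=\infty$.

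\textbf{The $L^1$ estimates.} Your argument rests on $\|A^{-1}f\|_1\le a^{-1}\|f\|_1$, which does not follow from the hypotheses. The bound $A\ge aI$ gives $\|e^{-tA}\|_{2\to2}\le e^{-at}$, but no decay of $e^{-tA}$ on $L^1$. For the Laplacian on $\mathbb{H}^n$, one has $A\ge\frac{(n-1)^2}{4}I$ and a conservative heat semigroup, so $\int A^{-1}f\,d\nu=\int_0^\infty\|f\|_1\,dt=\infty$ for every $0\ne f\ge0$. The correct comparison $u\le A^{-1}f$ therefore yields nothing. (The inequality does hold for $A_\kappa=\Delta^{1/2}+\kappa I$, but the theorem is stated for general $A$.) After this, $m\in L^1$ is asserted without justification. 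The identity $\int_X Au=\lim_{t\to0}t^{-1}\int_X(u-e^{-tA}u)$ needs $L^1(X)$-convergence of difference quotients that only converge in $L^2$, on a space of possibly infinite measure; that is exactly the point at issue. Your sentence about the contact set also has the sign wrong. For $0\le u\in\Dom(A)$, positivity gives $Au=-\lim_t t^{-1}e^{-tA}u\le0$ a.e.\ on $\{u=0\}$. So $\int_{\{u=0\}}Au\ge0$ fails unless $Au=0$ there; what you need is $\int_{\{u=0\}}|Au|\le\int_{\{u>0\}}Au$. This part can be repaired, in this order:
\begin{itemize}
\item pairing $Au=f-m$ with $\tau_\delta(u)$ (Lemma~\ref{lem:truncation-dissipation}) gives $\lambda\nu(\{u>0\})\le\|f\|_1$;
\item Cauchy--Schwarz then gives $u\in L^1$;
\item the inequality $\int_{\{u=0\}}e^{-tA}u\le\int_{\{u>0\}}(u-e^{-tA}u)$, divided by $t$, passes to the limit because $\{u>0\}$ has finite measure.
\end{itemize}
As written, however, \eqref{eq:obstacle-bounds} is not proved.

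\textbf{The existence step.} The existence and regularity step, which gives $u\in\Dom(A)$ with $0\le m\le\lambda$ and the contact relation, is not carried out in the case that matters, $\nu(X)=\infty$. There the penalized equation $Au_\varepsilon-\varepsilon^{-1}u_\varepsilon^-=f-\lambda$ has no solution in $\Dom(A)$: the left side lies in $L^2$ and $f-\lambda$ does not. Replacing $\lambda$ by $\lambda\ind_E$ changes the problem. Letting $E\uparrow X$ then requires uniform $L^2$ bounds on $m^E=f-Au^E$ and recovery of the contact relation in the limit. You supply neither, and both depend on the very $L^1$ estimates that are missing.

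\textbf{How the paper avoids this.} It penalizes the term $\lambda\int\phi^+$ rather than the constraint $\phi\ge0$. With $\beta_\varepsilon(s)=\min\{s^+/\varepsilon,\lambda\}$ one has $|\beta_\varepsilon(s)|\le|s|/\varepsilon$, so $Au_\varepsilon+\beta_\varepsilon(u_\varepsilon)=f$ is a genuine $L^2$ equation, and $0\le m_\varepsilon\le\lambda$ holds automatically. Positivity $u_\varepsilon\ge0$ follows from $f\ge0$ and Lemma~\ref{lem:positive-coercivity}. Crucially, $m_\varepsilon$ vanishes on $\{u_\varepsilon=0\}$, so the $\tau_\delta$ pairing yields $\|m_\varepsilon\|_1\le\|f\|_1$ and $\|m_\varepsilon\|_2^2\le\lambda\|f\|_1$ uniformly in $\varepsilon$. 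These bounds supply the weak compactness. Monotonicity of $u_\varepsilon$ in $\varepsilon$ then gives strong convergence and the contact relation.
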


For the application to manifolds, take
$A=A_\kappa:=\Delta^{1/2}+\kappa I$, $\kappa>0$.
Applying Theorem~\ref{thm:abstract-obstacle} to $f^+$ and $f^-$ gives
\[
 f=g+A_\kappa v,\qquad
 |g|\leq\lambda,\qquad \|g\|_1\leq\|f\|_1,
\]
where $dv$ vanishes outside a measurable set $\Omega$ with
$\mu(\Omega)\leq\lambda^{-1}\|f\|_1$. Since
$\cR_\kappa:=dA_\kappa^{-1}$ is an $L^2$ contraction,
\[
 \mu(\{|\cR_\kappa f|>\lambda\})
 \leq\mu(\Omega)+\lambda^{-2}\|\cR_\kappa g\|_2^2
 \leq\frac{2}{\lambda}\|f\|_1.
\]
The constant is uniform in $\kappa$. Spectral convergence as
$\kappa\downarrow0$ then proves Theorem~\ref{thm:main-weak}.

The proof of Theorem~\ref{thm:abstract-obstacle} uses the regularized
equation
\[
 Au_\varepsilon+\beta_\varepsilon(u_\varepsilon)=f,
 \qquad
 \beta_\varepsilon(s)=\min\{s^+/\varepsilon,\lambda\}.
\]
This is the Yosida regularization of the contact relation associated
with $\lambda s_+$; see \cite{Brezis1973}. Semigroup truncation
inequalities give positivity, monotonicity in $\varepsilon$, and a
uniform $L^1$ bound for $\beta_\varepsilon(u_\varepsilon)$.
These estimates permit passage to the limit and recover the contact
condition in \eqref{eq:obstacle}. The argument requires neither a
kernel representation nor geometric assumptions on $X$.

The paper is organized as follows. Section~\ref{sec:preliminaries}
collects the operator and truncation facts used in the proof.
Section~\ref{sec:obstacle} proves Theorem~\ref{thm:abstract-obstacle}.
In Section~\ref{sec:riesz}, we prove the uniform shifted estimate,
remove the shift, and deduce Corollary~\ref{cor:CD}.
Section~\ref{sec:extensions} gives the extension to spaces with a local
Hilbertian differential calculus.

Throughout the paper, $\|\cdot\|_p$ denotes the norm in the relevant
$L^p$ space. Functions in the obstacle argument are real-valued, and
all pointwise assertions are understood almost everywhere. We write
$u^+=\max\{u,0\}$ and $u^-=\max\{-u,0\}$.

\section{Preliminaries}\label{sec:preliminaries}
\subsection{The Laplacian and its semigroups}\label{riesz}
On a complete Riemannian manifold, the nonnegative Laplace--Beltrami
operator on $C_c^\infty(M)$ is essentially self-adjoint
\cite{Strichartz1983}. We use the same notation $\Delta$ for its
self-adjoint closure. The differential is a closed densely defined
operator
\[
 d:W^{1,2}(M)\subset L^2(M)\longrightarrow L^2(M;T^*M),
\]
and completeness gives $W^{1,2}(M)=W^{1,2}_0(M)$; see
\cite{Hebey1996,Grigoryan2009}. With the sign convention
$\Delta=d^*d=-\operatorname{div}\nabla$, the associated closed form is
\[
 \mathfrak q(u,v)=\int_M\langle du,dv\rangle\,d\mu,
 \qquad u,v\in W^{1,2}(M).
\]
The representation theorem for closed forms \cite{Kato1995} yields
\begin{equation}\label{eq:form}
 \Dom(\Delta^{1/2})=W^{1,2}(M),\qquad
 \|\Delta^{1/2}u\|_2=\|du\|_2.
\end{equation}
By polar decomposition,
\begin{equation}\label{eq:polar}
 d=\cR\Delta^{1/2},\qquad
 \|\cR\|_{L^2(M)\to L^2(M;T^*M)}\leq1,
\end{equation}
where $\cR$ is isometric on $(\ker\Delta)^\perp$ and zero on
$\ker\Delta$. This defines the $L^2$ Riesz transform, including when
$0$ belongs to the spectrum. It agrees with $d\Delta^{-1/2}$ on
$\Ran\Delta^{1/2}$.

The heat semigroup $H_t=e^{-t\Delta}$ is symmetric and sub-Markovian.
In particular, it preserves positivity and extends consistently to a
contraction on $L^q(M)$ for $1\leq q\leq\infty$; see
\cite{Davies1989,Ouhabaz2005}. Bochner's subordination formula gives
\begin{equation}\label{eq:subordination}
 P_t:=e^{-t\Delta^{1/2}}
 =\frac{t}{2\sqrt\pi}\int_0^\infty
 e^{-t^2/(4s)}H_s\,\frac{ds}{s^{3/2}},\qquad t>0.
\end{equation}
The measure in this formula has total mass one, so $P_t$ is again
sub-Markovian; see \cite{Stein1970}. Thus, for every $\kappa>0$,
\begin{equation}\label{eq:shift}
 A_\kappa=\Delta^{1/2}+\kappa I\geq\kappa I,
 \qquad e^{-tA_\kappa}=e^{-\kappa t}P_t
\end{equation}
satisfy the hypotheses of Theorem~\ref{thm:abstract-obstacle}.
This argument does not require stochastic completeness.

\subsection{Truncation inequalities}
Let $(X,\nu)$ be a $\sigma$-finite measure space. We first record two
inequalities for the generator of a symmetric semigroup. All pairings
in this subsection are real $L^2(X,\nu)$ pairings.

\begin{lemma}\label{lem:positive-coercivity}
Suppose that $A$ is self-adjoint, $A\geq aI$ for some $a>0$, and
$T_t=e^{-tA}$ preserves positivity. Then, for every $w\in\Dom(A)$,
\[
 \langle Aw,w^+\rangle\geq a\|w^+\|_2^2,
 \qquad
 \langle Aw,w^-\rangle\leq-a\|w^-\|_2^2.
\]
\end{lemma}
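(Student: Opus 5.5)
The plan is to derive both inequalities from the semigroup through the standard derivative formula $\langle Aw,v\rangle=\lim_{t\downarrow0}t^{-1}\langle w-T_tw,v\rangle$, valid for $w\in\Dom(A)$ and $v\in L^2$. We then use positivity of $T_t$ to control the cross terms between $w^+$ and $w^-$. The second inequality follows from the first applied to $-w$: since $(-w)^+=w^-$, we get $\langle A(-w),w^-\rangle\geq a\|w^-\|_2^2$, that is, $\langle Aw,w^-\rangle\leq -a\|w^-\|_2^2$. So only the first inequality needs proof.

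Write $w=w^+-w^-$. Then
\[
\langle w-T_tw,w^+\rangle=\langle w^+-T_tw^+,w^+\rangle+\langle T_tw^-,w^+\rangle-\langle w^-,w^+\rangle .
\]
The last term vanishes because $w^+$ and $w^-$ have disjoint supports. The middle term is nonnegative, since $T_t$ preserves positivity and both functions are nonnegative. Hence
\[
\langle w-T_tw,w^+\rangle\geq\langle w^+-T_tw^+,w^+\rangle .
\]
Next we bound the right side from below without assuming $w^+\in\Dom(A)$; in general $w^+$ lies only in the form domain, and not even that is needed here. Spectral calculus gives $\langle v-T_tv,v\rangle=\int_{[a,\infty)}(1-e^{-t s})\,d\langle E_sv,v\rangle\geq(1-e^{-ta})\|v\|_2^2$ for every $v\in L^2$, because $A\geq aI$ means the spectral measure lives on $[a,\infty)$ and $s\mapsto 1-e^{-ts}$ is increasing. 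Apply this with $v=w^+$, divide by $t$, and let $t\downarrow0$. The left side tends to $\langle Aw,w^+\rangle$ because $t^{-1}(w-T_tw)\to Aw$ in $L^2$, and $(1-e^{-ta})/t\to a$. This yields $\langle Aw,w^+\rangle\geq a\|w^+\|_2^2$.

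The only delicate point is that $w^+$ need not be in $\Dom(A)$. That is why the argument never applies $A$ to $w^+$: it works with the difference quotient of $T_t$ and uses the spectral lower bound, which holds for arbitrary $L^2$ vectors. Everything else is the sign bookkeeping above. In particular, no symmetry beyond self-adjointness is needed, and neither is the sub-Markov ($L^\infty$-contractivity) property; positivity preservation suffices.
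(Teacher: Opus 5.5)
Your proof is correct and follows essentially the same route as the paper. You split $w=w^+-w^-$, drop the nonnegative cross term $\langle T_tw^-,w^+\rangle$ by positivity, bound $\langle T_tw^+,w^+\rangle\le e^{-at}\|w^+\|_2^2$ using $A\geq aI$, divide by $t$, let $t\downarrow0$, and then apply the result to $-w$ (the paper phrases the spectral step as $\|T_t\|_{2\to2}\leq e^{-at}$ rather than through the spectral measure of $w^+$, which is only a cosmetic difference).
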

\begin{proof}
Since $w=w^+-w^-$ and $w^+w^-=0$, positivity gives
\[
 \langle w-S_tw,w^+\rangle
 =\|w^+\|_2^2-\langle T_tw^+,w^+\rangle
   +\langle T_tw^-,w^+\rangle
 \geq(1-e^{-at})\|w^+\|_2^2.
\]
Here we used $\|T_t\|_{2\to2}\leq e^{-at}$, which follows from the
spectral theorem. Divide by $t$ and let $t\downarrow0$.
As $t^{-1}(I-T_t)w\to Aw$ in $L^2$, the first inequality follows.
Applying it to $-w$ proves the second.
\end{proof}

For $\delta>0$, set
\[
 \tau_\delta(s)=\min\{s^+/\delta,1\},\qquad
 \Phi_\delta(s)=\int_0^s\tau_\delta(r)\,dr.
\]
Then $\Phi_\delta$ is nonnegative and convex, $\Phi_\delta(0)=0$,
and
\begin{equation}\label{eq:truncation-size}
 |\tau_\delta(s)|\leq |s|/\delta,
 \qquad 0\leq\Phi_\delta(s)\leq s^2/(2\delta).
\end{equation}
In particular, $\tau_\delta(w)\in L^2$ and $\Phi_\delta(w)\in L^1$
whenever $w\in L^2$.

\begin{lemma}\label{lem:truncation-dissipation}
Let $A$ be a nonnegative self-adjoint operator such that $T_t=e^{-tA}$ is
sub-Markovian. For $w\in\Dom(A)$ and $\delta>0$,
\[
 \langle Aw,\tau_\delta(w)\rangle\geq0.
\]
\end{lemma}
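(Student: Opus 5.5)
The plan is to follow the proof of Lemma~\ref{lem:positive-coercivity}: approximate $A$ by the bounded operators $t^{-1}(I-T_t)$ and prove the inequality at the level of $T_t$. Since $\tau_\delta(w)\in L^2$ by \eqref{eq:truncation-size} and $t^{-1}(I-T_t)w\to Aw$ in $L^2$ for $w\in\Dom(A)$, it suffices to show that
\[
 \langle w-T_tw,\tau_\delta(w)\rangle\geq0\qquad\text{for every } w\in L^2 \text{ and } t>0.
\]
Dividing by $t$ and letting $t\downarrow0$ then gives the claim.

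To prove this inequality, I would use convexity of $\Phi_\delta$ together with a Jensen-type inequality for the sub-Markovian operator $T_t$. Since $\Phi_\delta$ is convex with $\Phi_\delta'=\tau_\delta$, we have pointwise
\[
 \Phi_\delta(T_tw)-\Phi_\delta(w)\geq\tau_\delta(w)(T_tw-w).
\]
Integrating gives
\[
 \langle w-T_tw,\tau_\delta(w)\rangle\geq\int\Phi_\delta(w)\,d\nu-\int\Phi_\delta(T_tw)\,d\nu .
\]
Both integrals are finite by \eqref{eq:truncation-size}. It therefore remains to show the $L^1$ contraction property
\[
 \int\Phi_\delta(T_tw)\,d\nu\leq\int\Phi_\delta(w)\,d\nu .
\]

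This step is the main obstacle. I would handle it with the sub-Markovian Jensen inequality: for convex $\Phi\geq0$ with $\Phi(0)=0$, one has $\Phi(T_tw)\leq T_t\Phi(w)$ pointwise, and then $\int T_t\Phi(w)\,d\nu\leq\int\Phi(w)\,d\nu$ by the $L^1$ contractivity of $T_t$ (the symmetric sub-Markovian semigroup extends to $L^1$). To justify the pointwise Jensen inequality without a kernel, write $\Phi_\delta$ as a countable supremum of affine minorants $\alpha s+\beta$. Because $\Phi_\delta\geq0$, $\Phi_\delta(0)=0$ and $\Phi_\delta$ is Lipschitz with constant $1$, one can take these minorants with $\beta\leq0$, bounded slopes, and may replace each by its positive part, since $\Phi_\delta\geq0$. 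For each minorant, positivity gives $T_t(\alpha w+\beta)^+\geq(\alpha T_tw+\beta T_t\ind)^+$. Sub-Markovianity gives $T_t\ind\leq1$, and since $\beta\leq0$ this yields $\beta T_t\ind\geq\beta$, so $(\alpha T_tw+\beta T_t\ind)^+\geq(\alpha T_tw+\beta)^+$. Taking the supremum over the countable family gives $T_t\Phi_\delta(w)\geq\Phi_\delta(T_tw)$ almost everywhere.

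There is one technical point: $\ind$ and $(\alpha w+\beta)^+$ need not lie in $L^2$. I would resolve this by working on the consistent $L^1+L^\infty$ extension of $T_t$, or by restricting to a $\sigma$-finite exhaustion. Alternatively, one can use the specific structure of the minorants: a minorant of the form $(\alpha(s-s_0))^+$ with $s_0\geq0$ is dominated by $\alpha s^+\in L^2$, so each term stays in $L^2$. With Jensen established, the chain of inequalities above closes the argument.
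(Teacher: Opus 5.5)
Your proposal is correct and follows the paper's proof essentially step for step. Both arguments use the pointwise convexity inequality $\Phi_\delta(w)-\Phi_\delta(T_tw)\le\tau_\delta(w)(w-T_tw)$, the Jensen inequality $\Phi_\delta(T_tw)\le T_t\Phi_\delta(w)$ obtained from countably many supporting lines with $\beta\le0$ together with positivity and $T_t1\le1$, then $L^1$ contractivity, and finally division by $t$ with $t\downarrow0$.

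The only difference is in the technicality of unbounded arguments. The paper proves Jensen for bounded $v$ and passes to $L^2$ by truncation. You work directly with $w\in L^2$, using $(\alpha w+\beta)^+\le\alpha w^+$ together with the $L^1+L^\infty$ extension or an exhaustion. This is equally valid.
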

\begin{proof}
The sub-Markov property gives the Jensen inequality
\begin{equation}\label{eq:jensen}
 \Phi_\delta(T_tv)\leq T_t\Phi_\delta(v).
\end{equation}
For completeness, if $v$ is bounded and $\alpha s+\beta$ is a supporting
affine function for $\Phi_\delta$, then $\beta\leq0$. Positivity and
$T_t1\leq1$ imply
\[
 T_t\Phi_\delta(v)\geq\alpha T_tv+\beta T_t1
 \geq\alpha T_tv+\beta.
\]
Taking the supremum over a countable family of supporting affine
functions proves \eqref{eq:jensen}. For $v\in L^2$, apply this to the
bounded truncations of $v$. They converge in $L^2$, and their images
under $\Phi_\delta$ converge in $L^1$ by
\eqref{eq:truncation-size} and dominated convergence. Contractivity of
$T_t$ on these spaces and passage to an almost everywhere convergent
subsequence give \eqref{eq:jensen} for $v\in L^2$.

Since $T_t$ is an $L^1$ contraction, \eqref{eq:jensen} yields
$\int_X\Phi_\delta(T_tw)\,d\nu\leq\int_X\Phi_\delta(w)\,d\nu$.
Convexity also gives
\[
 \Phi_\delta(w)-\Phi_\delta(T_tw)
 \leq\tau_\delta(w)(w-T_tw).
\]
After integration, we obtain
$\langle w-T_tw,\tau_\delta(w)\rangle\geq0$.
Divide by $t$ and let $t\downarrow0$ to finish the proof.
\end{proof}

\section{An abstract obstacle decomposition}\label{sec:obstacle}
In this section, we prove Theorem~\ref{thm:abstract-obstacle}. Fix
$f\geq0$ in $L^1(X,\nu)\cap L^2(X,\nu)$ and $\lambda>0$.
For $\varepsilon>0$, define
\begin{equation}\label{eq:penalty}
 \beta_\varepsilon(s)=\min\{s^+/\varepsilon,\lambda\},
 \qquad \alpha_\varepsilon(s)=\int_0^s\beta_\varepsilon(r)\,dr.
\end{equation}
The function $\alpha_\varepsilon$ is convex and continuously differentiable,
and $0\leq \alpha_\varepsilon(s)\leq s^2/(2\varepsilon)$.
We shall first solve the penalized equation and then pass to the limit.

\begin{proof}[Proof of Theorem~\ref{thm:abstract-obstacle}]
\textbf{Step 1. The penalized equation.}
On $\Dom(A^{1/2})$, consider
\[
 J_\varepsilon(\phi)=\frac12\|A^{1/2}\phi\|_2^2
 +\int_X \alpha_\varepsilon(\phi)\,d\nu-\langle f,\phi\rangle.
\]
Since $A\geq aI$,
\[
 J_\varepsilon(\phi)\geq\frac14\|A^{1/2}\phi\|_2^2-a^{-1}\|f\|_2^2.
\]
Thus $J_\varepsilon$ is coercive and strictly convex. The integral
term is continuous and convex on $L^2$: indeed, the Lipschitz bound
for $\beta_\varepsilon$ gives
\[
 \left|\alpha_\varepsilon(s)-\alpha_\varepsilon(t)
       -\beta_\varepsilon(t)(s-t)\right|
 \leq\frac{|s-t|^2}{2\varepsilon}.
\]
It follows that $J_\varepsilon$ is weakly lower semicontinuous on the
form domain. The direct method gives a unique minimizer
$u_\varepsilon\in\Dom(A^{1/2})$.

The Euler equation reads
\[
 \langle A^{1/2}u_\varepsilon,A^{1/2}\phi\rangle
 +\langle\beta_\varepsilon(u_\varepsilon),\phi\rangle
 =\langle f,\phi\rangle,\qquad \phi\in\Dom(A^{1/2}).
\]
As $|\beta_\varepsilon(s)|\leq|s|/\varepsilon$, the penalization term
belongs to $L^2$. The representation theorem for closed forms therefore
implies $u_\varepsilon\in\Dom(A)$ and
\begin{equation}\label{eq:penalized-equation}
 Au_\varepsilon+m_\varepsilon=f,
 \qquad m_\varepsilon=\beta_\varepsilon(u_\varepsilon).
\end{equation}

\smallskip
\noindent\textbf{Step 2. Positivity and uniform estimates.}
Pair \eqref{eq:penalized-equation} with $u_\varepsilon^-$.
Since $m_\varepsilon u_\varepsilon^-=0$, Lemma~\ref{lem:positive-coercivity}
gives
\[
 0\leq\langle f,u_\varepsilon^-\rangle
 =\langle Au_\varepsilon,u_\varepsilon^-\rangle
 \leq-a\|u_\varepsilon^-\|_2^2.
\]
Consequently, $u_\varepsilon\geq0$ and $0\leq m_\varepsilon\leq\lambda$.
Pairing instead with $\tau_\delta(u_\varepsilon)$ and using
Lemma~\ref{lem:truncation-dissipation}, we obtain
\[
 \int_Xm_\varepsilon\tau_\delta(u_\varepsilon)\,d\nu
 \leq\int_Xf\tau_\delta(u_\varepsilon)\,d\nu\leq\|f\|_1.
\]
Now $\tau_\delta(u_\varepsilon)\uparrow\ind_{\{u_\varepsilon>0\}}$
as $\delta\downarrow0$, and $m_\varepsilon=0$ on
$\{u_\varepsilon=0\}$. Monotone convergence gives
\begin{equation}\label{eq:penalized-bounds}
 \|m_\varepsilon\|_1\leq\|f\|_1,
 \qquad \|m_\varepsilon\|_2^2\leq\lambda\|f\|_1.
\end{equation}

\smallskip
\noindent\textbf{Step 3. Monotonicity and convergence.}
If $0<\varepsilon\leq\eta$, then
$\beta_\varepsilon(s)\geq\beta_\eta(s)$ for $s\geq0$.
Subtract the two penalized equations and pair with
$w=(u_\varepsilon-u_\eta)^+$. On $\{w>0\}$,
\[
 \beta_\varepsilon(u_\varepsilon)
 \geq\beta_\eta(u_\varepsilon)\geq\beta_\eta(u_\eta).
\]
Lemma~\ref{lem:positive-coercivity} therefore implies
\[
 0=\langle A(u_\varepsilon-u_\eta),w\rangle
 +\langle\beta_\varepsilon(u_\varepsilon)
            -\beta_\eta(u_\eta),w\rangle
 \geq a\|w\|_2^2.
\]
Hence $0\leq u_\varepsilon\leq u_\eta$.

Choose $\varepsilon_n\downarrow0$. The sequence $u_{\varepsilon_n}$
decreases almost everywhere to a nonnegative function $u$, and is
dominated by $u_{\varepsilon_1}\in L^2$. Thus
\begin{equation}\label{eq:convergence}
 u_{\varepsilon_n}\longrightarrow u\quad\text{in }L^2,
 \qquad m_{\varepsilon_n}\rightharpoonup m\quad\text{in }L^2,
\end{equation}
where the second convergence follows from \eqref{eq:penalized-bounds}, after passing to a subsequence.
The set 
$\{h\in L^2:0\leq h\leq\lambda\}$ is  weakly
closed, and therefore $0\leq m\leq\lambda$.

\smallskip
\noindent\textbf{Step 4. The limit equation and the $L^1$ bound.}
For every measurable $E\subset X$ of finite measure,
\[
 \int_E m\,d\nu=\lim_{n\to\infty}\int_E m_{\varepsilon_n}\,d\nu
 \leq\|f\|_1.
\]
Exhausting $X$ by sets of finite measure gives
$\|m\|_1\leq\|f\|_1$. For $\phi\in\Dom(A)$,
self-adjointness and \eqref{eq:penalized-equation} give
\[
 \langle u_{\varepsilon_n},A\phi\rangle
 =\langle f-m_{\varepsilon_n},\phi\rangle.
\]
Pass to the limit using \eqref{eq:convergence}. We obtain
$\langle u,A\phi\rangle=\langle f-m,\phi\rangle$ for every $\phi\in\Dom(A)$.
Since $A=A^*$, this proves $u\in\Dom(A)$ and $Au=f-m$.

\smallskip
\noindent\textbf{Step 5. The contact relation.}
For $k\geq1$, put $E_k=\{u>1/k\}$. Then
$\nu(E_k)\leq k^2\|u\|_2^2<\infty$. For all sufficiently large $j$,
$\varepsilon_j\lambda<1/k$, and the monotonicity from Step~3 gives
\[
 u_{\varepsilon_j}\geq u>1/k>\varepsilon_j\lambda
 \quad\text{on }E_k.
\]
Thus $m_{\varepsilon_j}=\lambda$ on $E_k$. Testing the weak convergence
in \eqref{eq:convergence} against functions supported in $E_k$ gives
$m=\lambda$ there. Since $\{u>0\}=\bigcup_{k\geq1}E_k$, the contact
relation follows. Finally,
\[
 \lambda\nu(\{u>0\})\leq\|m\|_1\leq\|f\|_1,
 \qquad
 \|u\|_1\leq\nu(\{u>0\})^{1/2}\|u\|_2<\infty.
\]
This proves all the assertions.
\end{proof}

\begin{remark}
The contact relation is equivalently
$m\in\partial\Phi(u)$, where
$\Phi(\phi)=\lambda\int_X\phi_+\,d\nu$ is regarded as an extended-valued
convex functional on $L^2(X,\nu)$. The proof above realizes its
Moreau--Yosida regularization explicitly. The positive spectral lower
bound provides coercivity and the comparison in Step~3; the
sub-Markov property supplies the $L^1$ estimate in Step~2.
\end{remark}

\section{The weak type estimate and its consequences}\label{sec:riesz}
\subsection{Uniform estimates for the shifted transforms}\label{sec:endpoint}
For $\kappa>0$, let $A_\kappa$ be as in \eqref{eq:shift} and set
$\cR_\kappa=dA_\kappa^{-1}$. By \eqref{eq:form},
$\Dom(A_\kappa)=W^{1,2}(M)$. From \eqref{eq:polar},
\begin{equation}\label{eq:Rk-polar}
 \cR_\kappa=\cR\Delta^{1/2}(\Delta^{1/2}+\kappa I)^{-1}.
\end{equation}
The spectral multiplier $\sqrt\rho/(\sqrt\rho+\kappa)$ takes values
in $[0,1]$. Hence
\begin{equation}\label{eq:Rk-L2}
 \|\cR_\kappa f\|_2\leq\|f\|_2,\qquad f\in L^2(M),
\end{equation}
uniformly in $\kappa$.

We also require locality of the differential on level sets.
\begin{lemma}\label{lem:level-locality}
If $u\in W^{1,2}(M)$ is real-valued and $c\in\R$, then
$\ind_{\{u=c\}}du=0$ almost everywhere.
\end{lemma}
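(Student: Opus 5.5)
The plan is to reduce to the case $c=0$ by a localization argument, and then use the chain rule for Lipschitz truncations in $W^{1,2}$.

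\textbf{Reduction.} Since constants need not lie in $W^{1,2}(M)$, we localize. Fix $\chi\in C_c^\infty(M)$ and set $w=\chi(u-c)$. On a coordinate chart containing $\operatorname{supp}\chi$ one has $w\in W^{1,2}$, and on the open set $\{\chi=1\}^\circ$ we have $dw=du$ a.e. and $\{w=0\}\supset\{u=c\}$. Because $M$ is covered by countably many such open sets, the claim is local. It therefore suffices to prove the following: for $w\in W^{1,2}_{\mathrm{loc}}$ on an open subset of $\R^n$ (or of $M$), $\ind_{\{w=0\}}dw=0$ a.e.

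\textbf{Chain rule for $w^+$ and $w^-$.} We would use the standard fact that $w^+\in W^{1,2}_{\mathrm{loc}}$ with
\[
dw^+=\ind_{\{w>0\}}dw .
\]
To prove it, take $G_\delta(s)=\sqrt{s^2+\delta^2}-\delta$ for $s>0$ and $G_\delta(s)=0$ for $s\le0$. This is $C^1$, Lipschitz, with $G_\delta(0)=0$. The classical chain rule for $C^1$ functions with bounded derivative (via mollification in charts) gives
\[
dG_\delta(w)=G_\delta'(w)\,dw .
\]
Letting $\delta\downarrow0$, dominated convergence yields $G_\delta(w)\to w^+$ and $G_\delta'(w)\,dw\to\ind_{\{w>0\}}dw$ in $L^2_{\mathrm{loc}}$. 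Closedness of $d$ then gives the formula. Applying the same argument to $-w$ gives
\[
dw^-=\ind_{\{w<0\}}dw .
\]

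\textbf{Conclusion.} From $w=w^+-w^-$,
\[
dw=\ind_{\{w>0\}}dw+\ind_{\{w<0\}}dw=\ind_{\{w\ne0\}}dw ,
\]
and therefore $\ind_{\{w=0\}}dw=0$.

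The main obstacle is the chain rule step, specifically the passage to the limit. It needs closedness of $d$ and the pointwise convergence $G_\delta'(w)\to\ind_{\{w>0\}}$, which holds everywhere because $G_\delta'(0)=0$. Everything else is bookkeeping with charts. Alternatively, one could cite the Stampacchia theorem directly from \cite{Hebey1996} or \cite{Grigoryan2009}.
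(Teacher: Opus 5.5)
Your proof is correct and follows the paper's approach. You localize to charts, apply the Sobolev truncation formulas to the positive and negative parts of $u-c$, subtract, and cover $M$ by countably many charts; you additionally handle the constant with a cutoff $\chi$ and prove the truncation formula via $G_\delta$, which the paper takes as standard. One small sign slip: with $w^-=\max\{-w,0\}=(-w)^+$, applying the formula to $-w$ gives $dw^-=-\ind_{\{w<0\}}dw$, not $+\ind_{\{w<0\}}dw$, although your final display $dw=\ind_{\{w>0\}}dw+\ind_{\{w<0\}}dw$ already uses the correct sign.
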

\begin{proof}
Work locally in a coordinate chart, so that $u-c\in W^{1,2}_{\rm loc}$.
The Sobolev truncation formulas give
\[
 d(u-c)^+=\ind_{\{u>c\}}du,
 \qquad d(u-c)^-=-\ind_{\{u<c\}}du.
\]
Subtracting yields $du=\ind_{\{u\ne c\}}du$. A countable covering
by charts proves the assertion on $M$.
\end{proof}

\begin{proposition}\label{prop:shifted-weak}
For every $\kappa>0$, every real-valued $f\in L^1(M)\cap L^2(M)$,
and every $\lambda>0$,
\[
 \mu(\{|\cR_\kappa f|>\lambda\})\leq\frac{2}{\lambda}\|f\|_1.
\]
\end{proposition}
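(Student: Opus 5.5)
We apply Theorem~\ref{thm:abstract-obstacle} with $A=A_\kappa$, which is admissible by \eqref{eq:shift}, separately to $f^+$ and $f^-$ at the same level $\lambda$. This produces
\[
 f^\pm=A_\kappa u_\pm+m_\pm,\qquad u_\pm\ge0,\quad 0\le m_\pm\le\lambda,\quad m_\pm=\lambda\ \text{on }\{u_\pm>0\},
\]
together with $\|m_\pm\|_1\le\|f^\pm\|_1$ and $\lambda\mu(\{u_\pm>0\})\le\|f^\pm\|_1$. Put $v=u_+-u_-\in\Dom(A_\kappa)=W^{1,2}(M)$ and $g=m_+-m_-$, so that $f=A_\kappa v+g$.

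Next we record the size bounds for $g$. We have $|g|\le\max\{m_+,m_-\}\le\lambda$ pointwise, and $\|g\|_1\le\|m_+\|_1+\|m_-\|_1\le\|f\|_1$. Since $g\in L^1\cap L^\infty$, this gives
\[
 \|g\|_2^2\le\lambda\|g\|_1\le\lambda\|f\|_1 .
\]

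Then we split $\cR_\kappa f=dA_\kappa^{-1}(A_\kappa v)+\cR_\kappa g=dv+\cR_\kappa g$. The key point is that $dv$ vanishes outside $\Omega:=\{u_+>0\}\cup\{u_->0\}$. On the complement of $\Omega$ we have $v=0$, and Lemma~\ref{lem:level-locality} with $c=0$ applied to $v$ gives $dv=0$ a.e.\ on $\{v=0\}\supset M\setminus\Omega$. Moreover $\mu(\Omega)\le\lambda^{-1}(\|f^+\|_1+\|f^-\|_1)=\lambda^{-1}\|f\|_1$.

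Finally, outside $\Omega$ we have $\cR_\kappa f=\cR_\kappa g$. Hence Chebyshev's inequality and \eqref{eq:Rk-L2} give
\[
 \mu(\{|\cR_\kappa f|>\lambda\})\le\mu(\Omega)+\lambda^{-2}\|\cR_\kappa g\|_2^2\le\frac{\|f\|_1}{\lambda}+\frac{\lambda\|f\|_1}{\lambda^2}=\frac{2}{\lambda}\|f\|_1 .
\]

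The only delicate step is the support claim for $dv$. It requires $v\in W^{1,2}(M)$, which holds because $\Dom(A_\kappa)=\Dom(\Delta^{1/2})=W^{1,2}(M)$ by \eqref{eq:form}. It also requires that the positivity sets of $u_\pm$ control where $v\neq0$, and this is immediate from $u_\pm\ge0$.
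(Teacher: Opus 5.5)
Your proposal is correct and follows the paper's proof essentially step for step. It uses the same obstacle decomposition of $f^\pm$ for $A_\kappa$, the same bounds on $g$ and $\Omega$, locality of $d$ off $\Omega$, and Chebyshev with the uniform $L^2$ bound \eqref{eq:Rk-L2}; the only cosmetic difference is that you apply Lemma~\ref{lem:level-locality} to $v$ on $\{v=0\}$ rather than to $u_\pm$ separately, which is equally valid.
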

\begin{proof}
By \eqref{eq:shift}, Theorem~\ref{thm:abstract-obstacle} applies to
$A_\kappa$. Apply it to $f^+$ and $f^-$ at the same level $\lambda$:
\[
 f^\pm=A_\kappa u_\pm+m_\pm,\qquad
 u_\pm\geq0,\quad 0\leq m_\pm\leq\lambda.
\]
Set
\[
 g=m_+-m_-,\qquad w=u_+-u_-,\qquad
 \Omega=\{u_+>0\}\cup\{u_->0\}.
\]
Then $f=g+A_\kappa w$.
By \eqref{eq:obstacle} and \eqref{eq:obstacle-bounds}, we deduce that
\begin{equation}\label{eq:good-bounds}
 |g|\leq\lambda,\quad \|g\|_1\leq\|f\|_1,\quad
 \|g\|_2^2\leq\lambda\|f\|_1,\quad
 \mu(\Omega)\leq\lambda^{-1}\|f\|_1.
\end{equation}
Both $u_+$ and $u_-$ belong to $W^{1,2}(M)$ and vanish on
$M\setminus\Omega$. Lemma~\ref{lem:level-locality} implies $dw=0$
there. Applying $dA_\kappa^{-1}$ to the decomposition gives
\[
 \cR_\kappa f=\cR_\kappa g+dw
 =\cR_\kappa g\quad\text{on }M\setminus\Omega.
\]
Chebyshev's inequality, \eqref{eq:Rk-L2}, and \eqref{eq:good-bounds}
now yield
\begin{align*}
 \mu(\{|\cR_\kappa f|>\lambda\})
 &\leq\mu(\Omega)+\mu(\{|\cR_\kappa g|>\lambda\})\\
 &\leq\lambda^{-1}\|f\|_1+\lambda^{-2}\|\cR_\kappa g\|_2^2
 \leq\frac{2}{\lambda}\|f\|_1.
\end{align*}
This completes the proof.
\end{proof}

\subsection{Removal of the shift}\label{sec:limit}
Let $\{E_\rho\}$ be the spectral resolution of $\Delta$. By
\eqref{eq:Rk-polar} and the fact that $\cR$ vanishes on $\ker\Delta$,
\[
 \|\cR_\kappa f-\cR f\|_2^2
 =\int_{(0,\infty)}
 \left(\frac{\kappa}{\sqrt\rho+\kappa}\right)^2
 \,d\langle E_\rho f,f\rangle.
\]
Dominated convergence proves
\begin{equation}\label{eq:Rk-strong}
 \cR_\kappa f\longrightarrow\cR f
 \quad\text{in }L^2(M;T^*M),\qquad f\in L^2(M).
\end{equation}
No spectral gap for $\Delta$ is needed.

\begin{proof}[Proof of Theorem~\ref{thm:main-weak}]
For real-valued $f\in L^1(M)\cap L^2(M)$, choose
$\kappa_j\downarrow0$ so that, after passing to a subsequence in
\eqref{eq:Rk-strong}, $\cR_{\kappa_j}f\to\cR f$ almost everywhere.
For each $\lambda>0$,
\[
 \ind_{\{|\cR f|>\lambda\}}
 \leq\liminf_{j\to\infty}\ind_{\{|\cR_{\kappa_j}f|>\lambda\}}.
\]
Fatou's lemma and Proposition~\ref{prop:shifted-weak} prove
\eqref{eq:main-weak}.

We include the extension argument, since the target is a weak $L^1$
space. Given $f\in L^1(M;\R)$, take real-valued
$f_n\in L^1\cap L^2$ converging to $f$ in $L^1$. The estimate just
proved implies
\begin{equation}\label{eq:cauchy-measure}
 \mu(\{|\cR f_n-\cR f_m|>t\})
 \leq\frac{2}{t}\|f_n-f_m\|_1,\qquad t>0.
\end{equation}
Choose a subsequence with
$\|f_{n_{j+1}}-f_{n_j}\|_1\leq2^{-3j}$. The sets
\[
 E_j=\{|\cR f_{n_{j+1}}-\cR f_{n_j}|>2^{-j}\}
\]
have summable measures. Outside $\limsup_jE_j$, the corresponding
series of increments converges absolutely in $T_x^*M$. Denote the
resulting measurable limit by $F$. Moreover, outside
$\bigcup_{k\geq j}E_k$,
$|\cR f_{n_j}-F|\leq2^{1-j}$, so the subsequence converges to $F$
in measure. Equation~\eqref{eq:cauchy-measure} then gives convergence
of the full sequence in measure.

The limit is independent of the approximating sequence and respects
linear combinations. Define $\cR f=F$. Fatou's lemma along the
almost everywhere convergent subsequence gives
\[
 \mu(\{|\cR f|>\lambda\})
 \leq\liminf_j\mu(\{|\cR f_{n_j}|>\lambda\})
 \leq\frac{2}{\lambda}\|f\|_1.
\]
Thus the extension is continuous for the quasi-norm
\[
 \|\omega\|_{1,\infty}
 =\sup_{\lambda>0}\lambda\mu(\{|\omega|>\lambda\}).
\]
Applying the same estimate to differences proves continuity and,
by density, uniqueness.
\end{proof}

\subsection{The \texorpdfstring{$L^p$}{Lp} estimates}\label{lp}
\begin{proof}[Proof of Corollary~\ref{cor:CD}]
Apply the Marcinkiewicz interpolation theorem to the sublinear map
$f\mapsto|\cR f|$, using Theorem~\ref{thm:main-weak} and
\eqref{eq:polar}. This gives \eqref{eq:Rp-main} for real-valued
$f\in L^p\cap L^2$, $1<p<2$, with $C_p$ depending only on $p$.
Density gives the extension to real $L^p$. Splitting into real and
imaginary parts gives the complex-valued assertion, with at most an
absolute change in the constant. For $p=2$, \eqref{eq:polar} gives
$C_2=1$ directly, also for complex-valued functions.

To deduce \eqref{eq:CD-conj}, we verify that
$\Delta^{1/2}\varphi\in L^p\cap L^2$ for
$\varphi\in C_c^\infty(M)$. The spectral theorem gives
\begin{equation}\label{eq:square-root-semigroup}
 \Delta^{1/2}\varphi
 =\frac{1}{2\sqrt\pi}\int_0^\infty
 (I-H_t)\varphi\,\frac{dt}{t^{3/2}}
\end{equation}
in $L^2$. Contractivity of $H_t$ and
$(I-H_t)\varphi=\int_0^tH_s\Delta\varphi\,ds$ imply
\[
 \|(I-H_t)\varphi\|_p
 \leq\min\{t\|\Delta\varphi\|_p,\,2\|\varphi\|_p\}.
\]
Hence the integral in \eqref{eq:square-root-semigroup} converges
absolutely in $L^p$. Its $L^p$ and $L^2$ limits agree locally in
measure, and therefore almost everywhere. Now \eqref{eq:polar}
and \eqref{eq:Rp-main} give
\[
 \|\nabla\varphi\|_p=\|d\varphi\|_p
 =\|\cR\Delta^{1/2}\varphi\|_p
 \leq C_p\|\Delta^{1/2}\varphi\|_p.
\]
For $p=2$, the asserted identity follows from \eqref{eq:form}.
\end{proof}

\section{Extensions and remarks}\label{sec:extensions}
The argument uses the manifold structure only through the
factorization $\Delta=d^*d$ and locality of $d$. We formulate the
corresponding extension in a setting where the pointwise norm of the
differential is available.

Let $(X,\nu)$ be $\sigma$-finite, and let $\mathcal H$ be a Hilbert
space of square-integrable measurable sections with pointwise norm
$|\cdot|$, so that $\|\omega\|_{\mathcal H}^2=
\int_X|\omega|^2\,d\nu$. Suppose that
\[
 D:\mathcal V\subset L^2(X,\nu;\R)\longrightarrow\mathcal H
\]
is closed and densely defined. Set $L=D^*D$, and assume that
$e^{-tL}$ is sub-Markovian and
\begin{equation}\label{eq:abstract-locality}
 \ind_{\{u=0\}}Du=0,\qquad u\in\mathcal V.
\end{equation}
The same formulation applies to a Hilbert $L^2$-module equipped with
its pointwise norm and multiplication by characteristic functions.
Define $\mathcal T$ by the polar decomposition $D=\mathcal T L^{1/2}$,
with $\mathcal T=0$ on $\ker L$.

\begin{theorem}\label{thm:local-extension}
Under the preceding assumptions, for every real-valued
$f\in L^1(X,\nu)\cap L^2(X,\nu)$ and $\lambda>0$,
\[
 \nu(\{|\mathcal T f|>\lambda\})\leq\frac{2}{\lambda}\|f\|_1.
\]
Consequently, $\||\mathcal T f|\|_p\leq C_p\|f\|_p$ for
$f\in L^p\cap L^2$, $1<p\leq2$, with constants depending only on $p$.
The transform extends continuously to $L^1$ with values in the weak
$L^1$ space of sections, and to $L^p$ with values in the corresponding
$L^p$ space, for $1<p\leq2$.
\end{theorem}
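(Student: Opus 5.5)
The plan is to repeat the manifold argument of Section~\ref{sec:riesz} verbatim, with $L$ playing the role of $\Delta$ and $D$ the role of $d$. First, the functional-analytic setup. By the representation theorem for the closed form $\mathfrak q(u,v)=\langle Du,Dv\rangle_{\mathcal H}$ on $\mathcal V$, we have $\Dom(L^{1/2})=\mathcal V$ and $\|L^{1/2}u\|_2=\|Du\|_{\mathcal H}$. This gives the polar decomposition $D=\mathcal T L^{1/2}$ with $\|\mathcal T\|_{2\to 2}\le 1$. Since $e^{-tL}$ is sub-Markovian, Bochner subordination \eqref{eq:subordination} shows that $e^{-tL^{1/2}}$ is sub-Markovian. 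Hence, for $\kappa>0$, the operator $A_\kappa=L^{1/2}+\kappa I$ satisfies the hypotheses of Theorem~\ref{thm:abstract-obstacle}, exactly as in \eqref{eq:shift}. Put $\mathcal T_\kappa=DA_\kappa^{-1}=\mathcal T L^{1/2}(L^{1/2}+\kappa)^{-1}$. The spectral multiplier lies in $[0,1]$, so $\|\mathcal T_\kappa\|_{2\to2}\le1$ uniformly in $\kappa$.

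Second, the shifted weak estimate, following Proposition~\ref{prop:shifted-weak}. Apply Theorem~\ref{thm:abstract-obstacle} to $f^\pm$ at level $\lambda$ to get $f=g+A_\kappa w$ with $w=u_+-u_-$, $|g|\le\lambda$, $\|g\|_2^2\le\lambda\|f\|_1$, and $\nu(\Omega)\le\lambda^{-1}\|f\|_1$, where $\Omega=\{u_+>0\}\cup\{u_->0\}$. The key point is to show $Dw=0$ a.e.\ off $\Omega$. Here I would use the locality hypothesis \eqref{eq:abstract-locality} in place of Lemma~\ref{lem:level-locality}. We have $u_\pm\in\Dom(A_\kappa)\subset\mathcal V$, and $\ind_{\{u_\pm=0\}}Du_\pm=0$. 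Since $X\setminus\Omega\subset\{u_+=0\}\cap\{u_-=0\}$ and $D$ is linear, it follows that $\ind_{X\setminus\Omega}Dw=0$. Only the level $c=0$ is needed, which is exactly what is assumed. Then $\mathcal T_\kappa f=\mathcal T_\kappa g$ on $X\setminus\Omega$, and Chebyshev's inequality gives $\nu(\{|\mathcal T_\kappa f|>\lambda\})\le 2\lambda^{-1}\|f\|_1$. Pointwise norms of sections make $\{|\cdot|>\lambda\}$ meaningful, and multiplication by indicators is available on $\mathcal H$.

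Third, remove the shift and conclude. The spectral computation behind \eqref{eq:Rk-strong} gives $\mathcal T_\kappa f\to\mathcal T f$ in $\mathcal H$, using $\mathcal T=0$ on $\ker L$. Convergence in $\mathcal H$ means $\int|\mathcal T_\kappa f-\mathcal T f|^2\,d\nu\to0$, so along a subsequence the pointwise norms converge a.e. Fatou's lemma then yields the weak type bound with constant $2$. The $L^p$ bounds follow by Marcinkiewicz interpolation applied to the sublinear map $f\mapsto|\mathcal T f|$, between the weak $(1,1)$ bound and the $L^2$ contraction. The extensions to $L^1$ (into weak $L^1$ sections) and to $L^p$ follow by the Cauchy-in-measure argument from the proof of Theorem~\ref{thm:main-weak} and by density.

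The main obstacle I expect is in that extension argument. It requires that the space of measurable sections be complete under convergence in measure, i.e.\ that a.e.\ limits of sections exist and are again sections. On a manifold this holds pointwise in $T_x^*M$. In the abstract setting I would either take $\mathcal H$ to be a Hilbert $L^2$-module, where $L^0$-completion is standard, or simply define the target as the completion of $\mathcal H$ under the weak-$L^1$ and $L^p$ quasi-norms. Everything else is a direct transcription of the earlier proofs.
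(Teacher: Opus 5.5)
Your proposal is correct and follows the paper's proof essentially step for step: subordination makes $L^{1/2}+\kappa I$ admissible for Theorem~\ref{thm:abstract-obstacle}, the locality hypothesis \eqref{eq:abstract-locality} at level $0$ replaces Lemma~\ref{lem:level-locality} in the decomposition of $f^\pm$, and the argument concludes with the uniform $L^2$ bound for $D(L^{1/2}+\kappa I)^{-1}$, spectral removal of the shift, and the extension and interpolation arguments of Section~\ref{sec:riesz}. Your concern about completeness of the section space under convergence in measure is the same point the paper's final sentence addresses, via the measurable-section realization or the pointwise-norm completion of the Hilbert module.
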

\begin{proof}
The representation theorem gives $\Dom(L^{1/2})=\mathcal V$ and
$\|L^{1/2}u\|_2=\|Du\|_{\mathcal H}$. By subordination,
$L^{1/2}+\kappa I$ satisfies  the hypotheses of Theorem~\ref{thm:abstract-obstacle}.
Apply that theorem to $f^+$ and $f^-$ and use the notation
$g,v,\Omega$ from Proposition~\ref{prop:shifted-weak}.
Equation~\eqref{eq:abstract-locality} gives $Dv=0$ on
$X\setminus\Omega$, while spectral calculus gives
\[
 \|D(L^{1/2}+\kappa I)^{-1}\|_{L^2\to\mathcal H}\leq1.
\]
The proof of Proposition~\ref{prop:shifted-weak} now applies verbatim.
The spectral convergence in \eqref{eq:Rk-strong}, with $L$ in place of
$\Delta$, gives the unshifted estimate.
The extension argument of Section~\ref{sec:limit}, followed by the interpolation and density argument of Section~\ref{lp}, proves the remaining assertions.
For a Hilbert module, the measurable-section realization, or equivalently
its completion in the corresponding pointwise-norm topology, gives
the same extension argument.
\end{proof}

In particular, Theorem~\ref{thm:local-extension} applies to strongly
local Hilbertian Dirichlet spaces equipped with such a differential
calculus. It also applies to infinitesimally Hilbertian metric measure
spaces whenever their differential and heat flow have the stated
properties. These properties hold for the standard calculus on
$\operatorname{RCD}(K,N)$ and $\operatorname{RCD}(K,\infty)$ spaces;
see \cite{AmbrosioGigliSavare2014,Gigli2018}.
Previous Riesz-transform estimates in these settings include
\cite{JiangLiZhang2016,CarbonaroTamaniniTrevisan2023}, and estimates
on tamed Dirichlet spaces were obtained in \cite{EsakiXuKuwae2023}.
The endpoint proof above depends on the explicitly stated semigroup
and locality properties, with no curvature parameter in the constant.

\begin{remark}
The positive shift is used only to construct the obstacle decomposition.
It is removed at the $L^2$ level, where polar decomposition automatically
accounts for $\ker L$. Thus no assumption on the total measure, a
spectral gap for $L$, or preservation of constants by the heat semigroup
is required.
\end{remark}

\subsection*{Acknowledgments}
R. Jiang was partially supported by NNSF of China (12526205 \& 12471094).
B. Li    was partially supported by NNSF of China (12571121), and Zhejiang NSF of China (LMS26A010015).
H. Li was  partially supported by NNSF of China (12671116).
During the preparation of the manuscript, the authors used GPT-6 for assistance with concrete
computations and language editing. The authors take responsibility
for the mathematical arguments and the final content of the paper.

\providecommand{\bysame}{\leavevmode\hbox to3em{\hrulefill}\thinspace}
\providecommand{\MR}{\relax\ifhmode\unskip\space\fi MR }
\providecommand{\MRhref}[2]{%
  \href{http://www.ams.org/mathscinet-getitem?mr=#1}{#2}
}
\providecommand{\href}[2]{#2}

\bigskip
\noindent\textsc{Rui Chen}\par
\noindent School of Mathematical Sciences, Fudan University,
Shanghai 200433, China\par
\noindent\textit{E-mail address:} \texttt{chenrui23@m.fudan.edu.cn}

\medskip
\noindent\textsc{Renjin Jiang}\par
\noindent Academy for Multidisciplinary Studies, Capital Normal University,
Beijing 100048, China\par
\noindent\textit{E-mail address:} \texttt{rejiang@cnu.edu.cn}

\medskip
\noindent\textsc{Bo Li}\par
\noindent Institute of Mathematics, Jiaxing University,
Jiaxing 314001, China\par
\noindent\textit{E-mail address:} \texttt{bli@zjxu.edu.cn}

\bigskip
\noindent\textsc{Hong-Quan Li}\par
\noindent School of Mathematical Sciences, Fudan University,
Shanghai 200433, China\par
\noindent\textit{E-mail address:} \texttt{hongquan\_li@fudan.edu.cn}

\end{document}